\numberwithin{equation}{section}
\newtheorem{Thm}{Theorem}[section]
\newtheorem{Lem}[Thm]{Lemma}
\theoremstyle{remark}
\theoremstyle{definition}
\newcommand{\mysection}[2]{%
\vspace{2mm}\section{\bf #1}\label{#2}
}
\newcommand{\fig}[1]
        {\raisebox{-0.5\height}
                 {\includegraphics{#1}}
        }
\def\Z{{\mathbb Z}}
\def\R{{\mathbb R}}
\def\Q{{\mathbb Q}}
\def\calA{\mathscr{A}}
\def\calD{\mathscr{D}}
\def\calM{\mathscr{M}}
\def\deg{\mathrm{deg}}
\def\Hom{\mathrm{Hom}}
\newcommand{\mapright}[1]{
	\smash{\mathop{
		\hbox to 1cm{\rightarrowfill}}\limits^{#1}}}
\newcommand{\mapleft}[1]{
	\smash{\mathop{
		\hbox to 1cm{\leftarrowfill}}\limits^{#1}}}
\def\End{\mathrm{End}}
\def\Tr{\mathrm{Tr}}
\def\asum#1#2{\sum_{{{#1}\atop{#2}}}}
\def\Conf{C}
\def\bcalM{\overline{\calM}}
\def\ve{\varepsilon}
\def\bConf{\overline{C}}
\def\loc{\mathrm{local}}
\def\wLambda{\widehat{\Lambda}}
\def\wM{\widetilde{M}}
\def\wf{\widetilde{f}}
\def\wmu{\widetilde{\mu}}
\def\wxi{\widetilde{\xi}}
\def\anomaly{\mathrm{anomaly}}
\begin{document}

\title[A generalization of Fukaya's invariant of 3-manifolds II]{Higher order generalization of Fukaya's Morse homotopy invariant of 3-manifolds II. Invariants of 3-manifolds with $b_1=1$}
\author{Tadayuki Watanabe}
\address{Department of Mathematics, Shimane University,
1060 Nishikawatsu-cho, Matsue-shi, Shimane 690-8504, Japan}
\email{tadayuki@riko.shimane-u.ac.jp}
\date{\today}
\subjclass[2000]{57M27, 57R57, 58D29, 58E05}

{\noindent\footnotesize {\rm Preprint} (2016)}\par\vspace{15mm}
\maketitle
\vspace{-6mm}
\setcounter{tocdepth}{2}
\begin{abstract}
In this paper, it is explained that a topological invariant for 3-manifold $M$ with $b_1(M)=1$ can be constructed by applying Fukaya's Morse homotopy theoretic approach for Chern--Simons perturbation theory to a local coefficient system on $M$ of rational functions associated to the maximal free abelian covering of $M$. Our invariant takes values in Garoufalidis--Rozansky's space of Jacobi diagrams whose edges are colored by rational functions. It is expected that our invariant gives a lot of nontrivial finite type invariants of 3-manifolds.
\end{abstract}
\par\vspace{3mm}
%\tableofcontents

%%%%%%%%%%%%%%%%%%%%%%%%%%%%%%
%%%%%%%%%%%%%%%%%%%%%%%%%%%%%%
%%%%%%%%%%%%%%%%%%%%%%%%%%%%%%
\def\baselinestretch{1.06}\small\normalsize
\mysection{Introduction}{s:intro}

By using an idea of his Morse homotopy theory, K.~Fukaya constructed in \cite{Fu} a topological invariant of 3-manifolds with flat bundles on them that is analogous to Chern--Simons perturbation theory (\cite{AS, Ko1}). He considered a flat Lie algebra bundle over a 3-manifold $M$ and several Morse functions on $M$, and defined his invariant as the sum of the weights of some graphs (flow-graphs) in $M$ whose edges follow the gradients of the Morse functions. The weight is given by contracting the holonomies taken along the edges of a flow-graph by some tensor. Although Fukaya's construction was given for the 2-loop graphs, his construction also works for general 3-valent graphs at least when $M$ is a homology sphere with trivial connection (\cite{Wa1}). 

In this paper, we construct a topological invariant $\widetilde{z}_{2k}$ for closed oriented 3-manifolds $M$ with $b_1(M)=1$ by applying Fukaya's construction to a local coefficient system of rational functions associated to the maximal free abelian covering of $M$. There are fundamental results of Lescop about an equivariant 2-loop invariant for closed oriented 3-manifolds with $b_1(M)=1$ (\cite{Les1,Les2}), by which we were significantly influenced in the construction of $\widetilde{z}_{2k}$. The construction of $\widetilde{z}_{2k}$ would be rewritten by equivariant intersections in configuration spaces as given in \cite{Les1, Les2}. Prior to Lescop's works, Ohtsuki had given in \cite{Oh1, Oh2} a considerable refinement of the LMO invariant for 3-manifolds $M$ with $b_1(M)=1$, which is important in the study of equivariant perturbative invariant for non homology spheres. It is known that the LMO invariant (\cite{LMO}) is very strong for homology spheres whereas it is rather weaker for non homology spheres. It is remarkable that Ohtsuki's refined LMO invariant is also very strong for 3-manifolds $M$ with $b_1(M)=1$, and moreover his equivariant invariant is computable for some examples and yields some beautiful formulas. We expect that our invariant agrees with Ohtsuki's refined LMO invariant. 

In \cite{Wa2}, we construct an invariant of some degree 1 maps from 3-manifolds to the 3-torus by a method similar to the construction of this paper and apply it to study finite type invariants. It will follow from a result of \cite{Wa2} that the value of the 2-loop part $\widetilde{z}_2$ for some 3-manifolds with $b_1=1$ can be computed by clasper calculus of Goussarov and Habiro. In \cite{Wa3}, we define an invariant of fiberwise Morse functions on surface bundles over $S^1$, which can be considered as an analogue of the construction of the present paper for $S^1$-valued Morse theory. 

In the case where a knot in $M$ is present, a construction similar to that of this paper gives a knot invariant and gives many non-trivial finite type invariants of knots. We will explain this in a subsequent paper. 

Most of the construction in \cite{Wa1} is valid for the setting of this paper. We will only refer for what can be done by the same argument as \cite{Wa1}. 

%\tableofcontents

%\clearpage
%%%%%%%%%%%%%%%%%%%%%%%%%%%%%%
%%%%%%%%%%%%%%%%%%%%%%%%%%%%%%
\mysection{Preliminaries}{s:morse_complex}

\subsection{Acyclic Morse complex}

For simplicity, we assume that $M$ is an oriented, connected, closed 3-manifold with $b_1(M)=1$. Let $f:M\to \R$ be a Morse function and let $\Sigma\subset M$ be an oriented 2-submanifold that generates the oriented bordism group $\Omega_2(M)\cong H^1(M;\Z)=\Hom_\Z(H_1(M;\Z),\Z)\cong \Z$. By cutting $M$ along $\Sigma$ and by pasting its copies, the infinite cyclic covering $\pi:\widetilde{M}\to M$ is obtained. We denote by $t$ the generator of the group of covering transformation that shifts $\wM$ in the direction of the positive normal vector to $\Sigma$. Let $\wf:\wM\to \R$ be the Morse function that is the pullback $\wf=f\circ \pi$ and let $\wmu$ be the Riemannian metric on $\wM$ that is the pullback of the Riemannian metric $\mu$ on $M$. 

The pair $(\wf,\wmu)$ gives the gradient $\wxi$ on $\wM$. Let $P_i$ be the set of critical points of $f$ of index $i$ and we identify $P_i$ with the set of critical points of $\wf$ of index $i$ in a fundamental domain of $\wM$ between two successive components in $\pi^{-1}\Sigma$. Let $(C_*(\wf),\partial)$ be the Morse complex for $\wxi$ with $\Q$-coefficients. Namely, $C_i(\wf)=\Lambda^{P_i}$ ($\Lambda=\Q[t,t^{-1}]$) and the boundary $\partial:C_i(\wf)\to C_{i-1}(\wf)$ is given as follows.
\[ \partial(p)=\sum_{q\in P_{i-1}}\sum_{k\in\Z}n(\wxi;p,t^kq)\,t^kq,\]
where the coefficient $n(\wxi;p,t^kq)\in \Z$ is the number of the flow-lines of $-\wxi$ in $\wM$ that flow from $p$ to $t^kq$ counted with signs. In other words, the count of the flow-lines in $M$ from $p$ to $q$ whose intersection number with $\Sigma$ is $k$. The definition of the sign is given in \cite{Wa1}. We remark that the sum in the right hand side is finite. We put $\partial_{pq}=\sum_{k\in \Z}n(\wxi;p,t^kq)\,t^k$. It can be checked that $(C_*(\wf),\partial)$ is a chain complex, namely, $\partial^2=0$. The homology of the complex is identified with $H_*(\wM;\Q)$ as a $\Lambda$-module. 

Let $\wLambda$ be the field of rational functions $\frac{P(t)}{Q(t)}$ ($P(t),Q(t)\in\Lambda,\,Q(t)\neq 0$) and put
\[ C_i(f;\wLambda)=C_i(\wf)\otimes_\Lambda\wLambda=\wLambda^{P_i}. \]
This together with the boundary $\partial\otimes 1$ forms a chain complex. Since $\Lambda$ is a PID and $\wLambda$ is a flat $\Lambda$-module, we have, as $\Lambda$-modules,
\[ H_i(C_*(f;\wLambda))\cong H_i(\wM;\Q)\otimes_\Lambda \wLambda \] 
by the universal coefficient theorem (e.g., \cite[Theorem~VI.3.3]{CE}). From the fact that $\mathrm{rank}_\Lambda H_1(\wM;\Q)=0$ (e.g., \cite[Lemma~2.2]{Les1}) and by the Poincar\'{e} duality, it follows that $(C_*(f;\wLambda),\partial\otimes 1)$ is acyclic. 

\subsection{Combinatorial propagators}

$C=C_*(f;\wLambda)$ is a chain complex of based free $\wLambda$-modules. Let $\End_{\wLambda}(C)=\Hom_{\wLambda}(C,C)$ and let $\End_{\wLambda}(C)_k$ be its degree $k$ part. We define $\delta:\End_{\wLambda}(C)_k\to \End_{\wLambda}(C)_{k-1}$ by the following formula.
\[ \delta g=\partial\circ g-(-1)^k g\circ \partial. \]
This satisfies $\delta^2=0$. By the acyclicity of $C$ and by the K\"{u}nneth theorem (e.g., \cite[Theorem~VI.3.1a]{CE}), it follows that $(\End_{\wLambda}(C),\delta)$ is acyclic, too.

For example, $1\in \End_{\wLambda}(C)_0$ is a $\delta$-cycle. Thus there exists $g\in \End_{\wLambda}(C)_1$ such that $\delta g=\partial g+g\partial=1$. 
Such a $g$ is called a {\it combinatorial propagator} for $C$ (\cite{Fu}). For two choices $g,g'$ of combinatorial propagators for $C$, $g'-g$ is a $\delta$-cycle. Thus there exists $h\in\End_{\wLambda}(C)_2$ such that $\partial h-h\partial=g'-g$.

%\clearpage

%%%%%%%%%%%%%%%%%%%%%%%%%%%%%%
%%%%%%%%%%%%%%%%%%%%%%%%%%%%%%
\mysection{Perturbation theory with holonomies in $\wLambda$}{s:}

\subsection{Moduli space $\calM_\Gamma(\vec{\xi})$ of flow-graphs}

Let $f_1,f_2,\ldots,f_{3k}:M\to \R$ be a sequence of Morse functions and let $\xi_i$ be the gradient of $f_i$. We consider a connected edge-oriented trivalent graph with its sets of vertices and edges labelled and with $2k$ vertices and $3k$ edges. By the labelling $\{1,2,\ldots,3k\}\to \mathrm{Edges}(\Gamma)$ of $\Gamma$, we identify edges with numbers. Choose some of the edges and split each chosen edge into two arcs. We attach elements of $P_*(f_i)$ on the two 1-valent vertices ({\it white-vertices}) that appear after the splitting of the $i$-th edge. We call such obtained graph a {\it $\vec{C}$-graph} ($\vec{C}=(C_*(f_1;\wLambda),\ldots,C_*(f_{3k};\wLambda))$, see Figure~\ref{fig:cgraph}). A $\vec{C}$-graph has two kinds of ``edges'': a {\it compact edge}, which is connected,  and a {\it separated edge}, which consists of two arcs. We say that a separated edge obtained from a self-loop is {\it closed}. We call vertices that are not white vertices {\it black vertices}. If $p_i$ (resp. $q_i$) is the critical point attached on the input (resp. output) white vertex of a separated edge $i$, we define the degree of $i$ by $\deg(i)=\mathrm{ind}(p_i)-\mathrm{ind}(q_i)$, where $\mathrm{ind}(\cdot)$ denotes the Morse index. We define the degree of a compact edge $i$ by $\deg(i)=1$. We define the degree of a $\vec{C}$-graph by $\deg(\Gamma)=(\deg(1),\deg(2),\ldots,\deg(3k))$. 
\begin{figure}
\fig{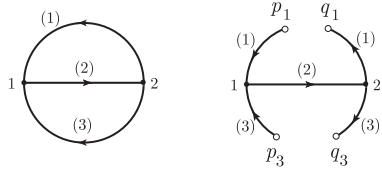}
\caption{A trivalent graph (left) and a $\vec{C}$-graph (right)}\label{fig:cgraph}
\end{figure}

We say that a continuous map $I$ from a $\vec{C}$-graph $\Gamma$ to $M$ is a {\it flow-graph} for the sequence $\vec{\xi}=(\xi_1,\xi_2,\ldots,\xi_{3k})$ if it satisfies the following conditions (see Figure~\ref{fig:flow-graph}).
\begin{figure}
\fig{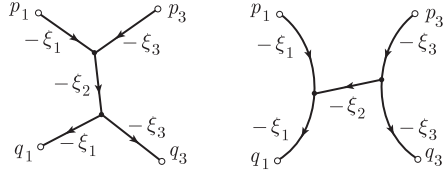}
\caption{Flow-graphs for $\vec{\xi}=(\xi_1,\xi_2,\xi_3)$}\label{fig:flow-graph}
\end{figure}

\begin{enumerate}
\item Every critical point $p_i$ attached on the $i$-th edge is mapped by $I$ to $p_i$ in $M$.
\item The restriction of $I$ to each edge of $\Gamma$ is a smooth embedding and at each point $x$ of the $i$-th edge that is not on a white vertex, the tangent vector of $I$ at $x$ (chosen along the edge orientation) is a positive multiple of $(-\xi_i)_x$. 
\end{enumerate}
For a $\vec{C}$-graph $\Gamma$, let $\calM_\Gamma(\vec{\xi})$ be the set of all flow-graphs for $\vec{\xi}$ from $\Gamma$ to $M$. By extracting black vertices, a natural map from $\calM_\Gamma(\vec{\xi})$ to the configuration space $\Conf_{2k}(M)$ of ordered tuples of $2k$ points is defined. It follows from a property of the gradient that this map is injective. This induces a topology on the set $\calM_\Gamma(\vec{\xi})$. 

\begin{Lem}[{Fukaya \cite{Fu,Wa1}}]\label{lem:0-mfd}
If $\vec{f}=(f_1,f_2,\ldots,f_{3k})$ and $\mu$ are generic, then for a $\vec{C}$-graph $\Gamma$ with $2k$ black vertices and $\deg(\Gamma)=(1,1,\ldots,1)$, the space $\calM_\Gamma(\vec{\xi})$ is a compact 0-dimensional manifold. Moreover, this property can be assumed for all $\vec{C}$-graphs with $2k$ black vertices simultaneously\footnote{In \cite{Wa1}, we considered flow-graphs on {\it punctured} homology sphere. Nevertheless, the proof of the corresponding lemma is essentially the same.}.
\end{Lem}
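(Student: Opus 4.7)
The plan is to realize $\calM_\Gamma(\vec{\xi})$ as a transverse intersection inside the configuration space $\Conf_{2k}(M)$ and then invoke a standard transversality--compactness package. First I would use the injectivity remark preceding the lemma to regard a candidate flow-graph as being determined by the position $(v_1,\dots,v_{2k})\in\Conf_{2k}(M)$ of its black vertices together with the prescribed critical points at the white vertices. For each compact edge labelled $i$ incident to two black vertices $v_a,v_b$, the condition that $v_a$ and $v_b$ lie on a common $-\xi_i$-trajectory with $v_b$ downstream cuts out a codimension-two locally closed submanifold $\Phi_i\subset M\times M$. For a separated edge labelled $i$ with white vertices carrying critical points $p_i,q_i$, the condition becomes that the adjacent black vertex on the $p_i$-side lies on the descending manifold $\calD(p_i;\xi_i)$ and the one on the $q_i$-side lies on the ascending manifold $\calA(q_i;\xi_i)$; together these impose codimension $3-\mathrm{ind}(p_i)+\mathrm{ind}(q_i)=3-\deg(i)=2$. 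Summed over the $3k$ edges of degree $1$, the total codimension is $6k=\dim\Conf_{2k}(M)$, so under transversality $\calM_\Gamma(\vec{\xi})$ is a $0$-manifold.

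Next I would establish that transversality is a generic property of the pair $(\vec{f},\mu)$. For a Morse--Smale pair $(f_i,\mu)$ the ascending/descending manifolds $\calA(q_i;\xi_i),\calD(p_i;\xi_i)$ are smooth, and away from the critical set the compact-edge locus $\Phi_i$ is smooth, with both depending smoothly on $(f_i,\mu)$. A Sard--Smale argument applied to the evaluation map from a Banach manifold of perturbations of $(\vec{f},\mu)$ into the product $\prod_{i=1}^{3k}$ of edge-constraint ambient spaces shows that for a residual set of choices the intersection defining $\calM_\Gamma(\vec{\xi})$ is transverse. Since the different $f_i$ can be perturbed independently, the cross-term Morse--Smale transversality between ascending/descending manifolds of $\xi_i$ and $\xi_j$ ($i\neq j$) is automatic. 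Because there are only finitely many $\vec{C}$-graphs on $2k$ black vertices (up to labelling), the simultaneous statement follows by intersecting finitely many residual sets.

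For compactness I would use the Fulton--MacPherson compactification $\bConf_{2k}(M)$. A sequence in $\calM_\Gamma(\vec{\xi})$ can fail to converge inside $\calM_\Gamma(\vec{\xi})$ only through one of the standard degenerations: breaking of a gradient trajectory at an intermediate critical point, collision of two black vertices, or escape of a vertex to a critical point not prescribed at a white vertex. Each such degeneration corresponds to a stratum in the compactification described by a $\vec{C}$-graph of strictly smaller formal dimension; by the transversality of the previous paragraph applied to these auxiliary graphs, the corresponding moduli spaces are empty generically. Hence no degeneration occurs, $\calM_\Gamma(\vec{\xi})$ is closed in $\Conf_{2k}(M)$, and, being a discrete subset whose closure in $\bConf_{2k}(M)$ adds no boundary strata, is finite.

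The main obstacle is the \emph{simultaneous} transversality over all combinatorial types together with the careful bookkeeping of boundary strata of the Fulton--MacPherson compactification, in particular ensuring that each broken-trajectory or vertex-collision stratum has the expected excess codimension. This is exactly the content of the corresponding argument in \cite{Wa1}; as the footnote indicates, no new difficulty arises for $b_1(M)=1$, since $\calM_\Gamma(\vec{\xi})$ is defined in $M$ itself rather than in the covering $\wM$, so the switch from homology sphere to $b_1=1$ does not affect this step.
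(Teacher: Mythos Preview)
The paper does not give its own proof of this lemma: it is stated with attribution to \cite{Fu,Wa1} and the footnote explicitly says the proof is ``essentially the same'' as the corresponding lemma in \cite{Wa1}. Your sketch---realizing $\calM_\Gamma(\vec{\xi})$ as a transverse intersection of codimension-$2$ edge constraints in $\Conf_{2k}(M)$, invoking Sard--Smale for genericity, and ruling out boundary strata in $\bConf_{2k}(M)$ by a dimension count---is exactly the argument carried out in those references, so your proposal is correct and aligned with what the paper intends.
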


\subsection{The count of $\calM_\Gamma(\vec{\xi})$}

When the assumption of Lemma~\ref{lem:0-mfd} is satisfied, we may define an orientation of $\calM_\Gamma(\vec{\xi})$ in a similar way as \cite{Wa1}. Roughly, an orientation of $\calM_\Gamma(\vec{\xi})$ is defined as follows. The space $\calM_\Gamma(\vec{\xi})$ can be considered as the intersection of several smooth manifold strata in $M^{2k}$ each corresponds to the moduli space of an edge of $\Gamma$. We define an orientation of $\calM_\Gamma(\vec{\xi})$ by the coorientation $\bigwedge_{e\in\mathrm{Edges}(\Gamma)}v_e$ of $\calM_\Gamma(\vec{\xi})$ in $M^{2k}$ for some coorientations $v_e$ of the strata for $e$. If $e$ is compact or non-closed separated, then $e$ has two black vertices and $v_e$ is a vector in $\bigwedge^2(T_xM\oplus T_yM)$, where $x,y$ are the images from the black vertices of $e$.  If $e$ is closed separated, then the corresponding stratum is a flow-line $\gamma_{pq}$ of $-\xi_e$ between some pair $p,q$ of critical points with $\mathrm{ind}(p)=\mathrm{ind}(q)-1$. In this case, the orientation of $\gamma_{pq}$ is defined by $(-1)^{\mathrm{ind}(q)}\ve(\gamma_{pq})\,o$,\footnote{This definition is consistent with that for non-closed separated edges. Namely, this agees with that induced on the intersection of the stratum for a non-closed separated edge with the diagonal in $M\times M$. In the notation of \cite{Wa1}, the stratum for a separated edge $e$ is cooriented by $o^*_M(\calA_q(f_e))_x\wedge o^*_M(\calD_p(f_e))_y$, whereas $\gamma_{pq}$ is cooriented by $o^*_M(\calD_p(f_e))_x\wedge o^*_M(\calA_q(f_e))_x$. On the diagonal, the two differ by $(-1)^{\mathrm{ind}(q)(3-\mathrm{ind}(p))}=(-1)^{\mathrm{ind}(q)}$. } where $\ve(\gamma_{pq})$ is the sign of $\gamma_{pq}$ in the definition of $\partial(p)$, and $o$ is the orientation of $\gamma_{pq}$ given by $-\xi_e$.

In \cite{Wa1}, the number $\#\calM_\Gamma(\vec{\xi})$ was defined as the sum of the signs determined by the orientations. The following definition of $\#\calM_\Gamma(\vec{\xi})$ is different from that of \cite{Wa1}. We count points of $\calM_\Gamma(\vec{\xi})$ with weights in $\Lambda^{\otimes 3k}=\Q[t_1^{\pm 1},t_2^{\pm 1},\ldots,t_{3k}^{\pm 1}]$ as follows.
\[ \begin{split}
  \#\calM_\Gamma(\vec{\xi})&=\sum_{I\in\calM_\Gamma(\vec{\xi})}\ve(I),\quad \ve(I)=\pm t_1^{n_1}t_2^{n_2}\cdots t_{3k}^{n_{3k}},\\
  n_i &= (\mbox{The intersection number of the $i$-th edge of $I$ with $\Sigma$})
\end{split}\]
We take the sign $\pm$ as the one determined by the orientation of $I$. The intersection number of the $i$-th edge with $\Sigma$ is determined by the orientations of the edge and of $\Sigma$ and that of $M$, by $\mathrm{ori}(\mbox{$i$-th edge})\wedge \mathrm{ori}(\Sigma)=\mathrm{ori}(M)$.

\subsection{The generating series and its trace}

Let $R$ be either $\Lambda$ or $\wLambda$. An {\it $R$-colored $\vec{C}$-graph} is a pair of a $\vec{C}$-graph $\Gamma$ and a map $\phi:\mathrm{Edges}(\Gamma)\to R$. We will write an $R$-colored $\vec{C}$-graph as $\Gamma(\phi)$ or $\Gamma(\phi(1),\phi(2),\ldots,\phi(3k))$. We call $\phi$ an $R$-coloring of $\Gamma$. We define an action of a monomial $t_1^{n_1}t_2^{n_2}\cdots t_{3k}^{n_{3k}}$ on a $\vec{C}$-graph as follows.
\[ t_1^{n_1}t_2^{n_2}\cdots t_{3k}^{n_{3k}}\cdot \Gamma=\Gamma(t^{n_1},t^{n_2},\ldots,t^{n_{3k}}). \]
The right hand side is a $\Lambda$-colored $\vec{C}$-graph. By extending this by $\Q$-linearity, the formal linear combination $\#\calM_\Gamma(\vec{\xi})\cdot \Gamma$ of $\Lambda$-colored $\vec{C}$-graphs is defined. 

Let $\calA_{2k}(\Lambda)$ (resp. $\calA_{2k}(\widehat{\Lambda})$) be the vector space over $\Q$ spanned by pairs $(\Gamma,\phi)$, where $\Gamma$ is an (unlabelled) edge-oriented trivalent graph with $2k$ vertices and with vertex-orientation and $\phi$ is a $\Lambda$-coloring (resp. $\widehat{\Lambda}$-coloring) of $\Gamma$, quotiented by the relations AS, IHX, Orientation reversal, Linearity, Holonomy (Figure~\ref{fig:relations}) and automorphisms of oriented graphs\footnote{This definition is by Garoufalidis and Rozansky \cite{GR}. The AS and the IHX relations are due to Bar-Natan \cite{BN}.}. 
\begin{figure}
\fig{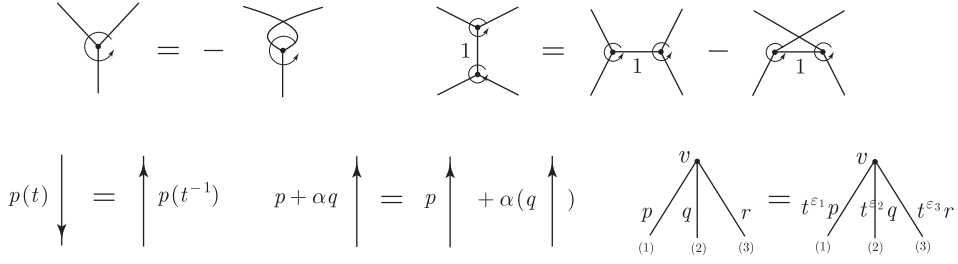}
\caption{The relations AS, IHX, Orientation reversal, Linearity and Holonomy. $p,q,r\in \Lambda$ (or $p,q,r\in \widehat{\Lambda}$), $\alpha\in\Q$. The exponent $\ve_i$ is $1$ if the $i$-th edge is oriented toward $v$ and otherwise $-1$.}\label{fig:relations}
\end{figure}

Now we shall define an element $z_{2k}(\vec{\xi})\in \calA_{2k}(\wLambda)$. Let $\vec{g}=(g^{(1)},g^{(2)},\ldots,g^{(3k)})$ be a sequence of combinatorial propagators for $\vec{C}=(C_*(f_1;\wLambda),\ldots,C_*(f_{3k};\wLambda))$. Then we define
\begin{equation}\label{eq:Z_k}
 z_{2k}(\vec{\xi})=\Tr_{\vec{g}}\Bigl(\sum_\Gamma \#\calM_\Gamma(\vec{\xi})\cdot \Gamma\Bigr). 
\end{equation}
Here, the sum is taken over all $\vec{C}$-graphs $\Gamma$ with $2k$ black vertices and $\deg(\Gamma)=(1,1,\ldots,1)$, and $\Tr_{\vec{g}}$ is defined as follows. For simplicity, we assume that the labels for the separated edges in a $\Lambda$-colored $\vec{C}$-graph $\Gamma(u_1(t),u_2(t),\ldots,u_{3k}(t))$ is $1,2,\ldots,r$. Let $p_i,q_i$ be the critical points on the input and output of the $i$-th edge of $\Gamma$, respectively and let $g_{q_ip_i}^{(i)}\in\wLambda$ be the coefficient of $p_i$ in $g^{(i)}(q_i)$. Then $\Tr_{\vec{g}}(\Gamma(u_1(t),u_2(t),\ldots,u_{3k}(t)))$ is the equivalence class in $\calA_{2k}(\wLambda)$ of a $\wLambda$-colored graph obtained by identifying each pair of the two white vertices of the separated edges in 
\[ \Gamma(-g^{(1)}_{q_1p_1}u_1(t),\ldots,-g^{(r)}_{q_rp_r}u_r(t),u_{r+1}(t),\ldots,u_{3k}(t)).\]
The definition of $\Tr$ can be generalized to graphs with other degrees in the same manner. 

\begin{Lem}\label{lem:L}
$z_{2k}(\vec{\xi})$ does not depend on the choices of $\vec{g}$ and of the hypersurface $\Sigma\subset M$ within the oriented bordism class.
\end{Lem}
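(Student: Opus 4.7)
The plan is to treat the two independences in turn. Both rely crucially on the graph relations defining $\calA_{2k}(\wLambda)$---IHX for the propagator change and Holonomy for the hypersurface change---and both follow the pattern of \cite{Wa1}.

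\textbf{Independence of $\vec g$.} It suffices to change one combinatorial propagator at a time. Fix $i$ and let $g^{(i)},g'^{(i)}$ be two such choices. By the acyclicity of $(\End_{\wLambda}(C_*(f_i;\wLambda)),\delta)$, there exists $h\in\End_{\wLambda}(C_*(f_i;\wLambda))_2$ with $\partial h-h\partial=g'^{(i)}-g^{(i)}$. The difference of the two $z_{2k}$'s is obtained by substituting, on each $i$-th separated edge, the trace factor $-g^{(i)}_{q_ip_i}$ by $-(\partial h-h\partial)_{q_ip_i}=\sum_r\partial_{q_ir}h_{rp_i}-\sum_s h_{q_is}\partial_{sp_i}$. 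Each summand factors as an $h$-propagator value times a Morse-boundary coefficient $\partial_{\bullet\bullet}$ of $-\xi_i$; geometrically this amounts to inserting a short auxiliary $-\xi_i$ flow-arc attached to one of the two black vertices adjacent to the $i$-th edge, with an $h$-propagator carrying the remainder of the separated edge. Summed over all $\vec C$-graphs $\Gamma$ with $2k$ black vertices, these insertions are organized into local triples related by the IHX relation at the attaching black vertex, and the signs match so that each triple vanishes in $\calA_{2k}(\wLambda)$.

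\textbf{Independence of $\Sigma$.} Two $\Sigma,\Sigma'\subset M$ in the same oriented bordism class are homologous in $M$, and the change amounts to some $\Z$-valued level shift $\phi(p)$ on the chosen lift $\tilde p\in\wM$ of each critical point $p$, namely $\tilde p\mapsto t^{\phi(p)}\tilde p$. Correspondingly the Morse boundary conjugates as $\partial'_{pq}=t^{\phi(p)-\phi(q)}\partial_{pq}$, and $g'^{(i)}_{qp}:=t^{\phi(q)-\phi(p)}g^{(i)}_{qp}$ is a valid combinatorial propagator for the new complex. On the geometric side, the intersection number of the $i$-th edge of any flow-graph $I$ with $\Sigma'$ equals $n_i+\phi(\mathrm{tail}(e_i))-\phi(\mathrm{head}(e_i))$, giving $\ve'(I)=\ve(I)\cdot\prod_e t_e^{\phi(\mathrm{tail}(e))-\phi(\mathrm{head}(e))}$. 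On each separated edge the extra weight $t_i^{\phi(p_i)-\phi(q_i)}$ is exactly cancelled by the extra propagator factor $t^{\phi(q_i)-\phi(p_i)}$ after $\Tr_{\vec g'}$, while the remaining factor localizes at each black vertex $v$ in precisely the form killed by the Holonomy relation (with parameter $u=t^{\pm\phi(v)}$). Hence $z_{2k}(\vec\xi)$ is unchanged.

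\textbf{Main obstacle.} The propagator-invariance step is the delicate one, because the algebraic identity $\delta h=g'-g$ must be translated into an IHX cancellation with matching signs. The sign conventions for the orientation of $\calM_\Gamma(\vec\xi)$, the closed-separated-edge sign $(-1)^{\mathrm{ind}(q)}\ve(\gamma)$, and the graded Leibniz rule defining $\delta$ must all conspire correctly, and this is where we rely on the parallel argument in \cite{Wa1}, which transports to the $\wLambda$-colored setting with only cosmetic modifications.
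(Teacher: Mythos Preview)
Your treatment of the $\Sigma$-independence is essentially the same as the paper's, just packaged globally rather than move-by-move: the paper decomposes the passage $\Sigma\leadsto\Sigma'$ into homotopies (crossing black vertices: Holonomy; crossing a critical point $p$: conjugation by $S_p$, which is exactly your $t^{\pm\phi(p)}$) and handle moves done away from all flow-graphs. Your global level-shift $\phi$ is the composite of all the $S_p$'s, and the cancellation you describe at black vertices is precisely the Holonomy relation the paper uses. So this half is fine.

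The propagator-independence sketch, however, misidentifies the mechanism. The summands $h_{q_ir}\partial_{rp_i}$ and $\partial_{q_is}h_{sp_i}$ involve Morse-boundary coefficients between \emph{critical points} (white vertices), not ``short auxiliary flow-arcs attached to black vertices,'' and they do not assemble into IHX triples at a black vertex. The argument in \cite[\S6]{Wa1}, to which both you and the paper defer, is a Stokes-type argument: one considers the one-dimensional moduli spaces $\overline{\calM}_{\Gamma'}(\vec\xi)$ for $\vec C$-graphs $\Gamma'$ with $\deg(i)=2$ on the $i$-th edge, applies $\Tr$ with $h$ in the $i$-th slot, and uses that the signed boundary vanishes. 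The edge-breaking boundaries of these one-manifolds produce exactly the $\partial h$ and $h\partial$ contributions you want to kill; the \emph{other} boundary strata (subgraph collapses) are the ones handled by IHX, AS and the symmetry arguments, not the propagator-difference terms themselves. So IHX enters only indirectly, to clear the remaining boundary faces, and your sentence ``these insertions are organized into local triples related by the IHX relation at the attaching black vertex'' is not a correct description of why $\Tr_{\partial h-h\partial}$ vanishes. Since you ultimately defer to \cite{Wa1} anyway, the conclusion stands, but the sketch should be corrected.
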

\begin{proof}
That $z_{2k}(\vec{\xi})$ does not depend on the choice of $\vec{g}$ can be shown by the same argument as in \cite[\S{6}]{Wa1}.

For the rest, let $\Sigma'\subset M$ be another oriented 2-submanifold that is oriented bordant to $\Sigma$. Then by Morse theory, one may see that $\Sigma'$ is obtained from $\Sigma$ by a finite sequence of the following moves.
\begin{enumerate}
\item A homotopy in $M$.
\item An addition or a deletion of a 1-handle in a small ball $B$ in $M$.
\end{enumerate}
We may assume that for each move of type (2), the small ball $B$ is disjoint from all the critical points of $f_i$ and the graphs in $\bcalM_{\Gamma}(\vec{\xi})$ for all $\Gamma$. Thus a move of type (2) does not change $z_{2k}(\vec{\xi})$. 

The sum in (\ref{eq:Z_k}) may change under a move of type (1) when a homotopy intersects a black vertex of a flow-graph or intersects a critical point of some $f_i$. When a homotopy intersects a black vertex $v$, a $\Lambda$-coloring for the three edges incident to $v$ changes. The change of $\Lambda$-coloring is precisely the Holonomy relation. When a small homotopy intersects a critical point $p$ of some Morse function, say of $f_1$, the boundary operator of the twisted Morse complex $(C_*^{(1)},\partial^{(1)})$ for the first edge may change. Let $\overline{\partial}^{(1)}$ be the resulting boundary operator. Let $S_p:C_*^{(1)}\to C_*^{(1)}$ be the chain map of degree 0 defined for critical points $x\in P_*^{(1)}$ by
\[ S_p(x)=\left\{\begin{array}{ll}
t^{\pm 1}p & \mbox{if $x=p$}\\
x & \mbox{otherwise}
\end{array}\right. 
\]
where the sign $\pm 1$ depends on whether the homotopy crosses $p$ from above or below. Then we have $\overline{\partial}^{(1)}=S_p^{-1}\circ \partial^{(1)}\circ S_p$ and that $\overline{g}^{(1)}=S_p^{-1}\circ g^{(1)}\circ S_p:C_*^{(1)}\to C_{*+1}^{(1)}$ is a combinatorial propagator for $(C_*^{(1)},\overline{\partial}^{(1)})$.  There is an analogous left/right action of $S_p^{\pm 1}$ on a $\Lambda$-colored $\vec{C}$-graph given as follows. For a $\Lambda$-colored $\vec{C}$-graph $\Gamma(\phi)$, $\Gamma(\phi)\circ S_p$ (resp. $S_p\circ\Gamma(\phi)$) is the $\Lambda$-colored $\vec{C}$-graph obtained from $\Gamma(\phi)$ by replacing $\phi(1)$ with $t^{\pm 1}\phi(1)$ (resp. with $t^{\mp 1}\phi(1)$) if $p$ is the input (resp. the output) of the first edge of $\Gamma$ and otherwise $\Gamma(\phi)\circ S_p=\Gamma(\phi)$ (resp. $S_p\circ\Gamma(\phi)=\Gamma(\phi)$). After the small homotopy that crosses $p$, the flow graph that was counted as $\Gamma(\phi)$ will be counted as $S_p^{-1}\circ \Gamma(\phi)\circ S_p$. Now we have
\[ \Tr_{\overline{g}^{(1)},\ldots}(S_p^{-1}\circ \Gamma(\phi)\circ S_p)
=\Tr_{S_p\circ\overline{g}^{(1)}\circ S_p^{-1},\ldots}(\Gamma(\phi))
=\Tr_{g^{(1)},\ldots}(\Gamma(\phi)).
\]
This completes the proof of the invariance under a move of type (1).
\end{proof}

\subsection{The invariant $\widetilde{z}_{2k}$}

For the independence of the choice of $\vec{\xi}$, we shall define $\widetilde{z}_{2k}$ by adding a correction term to $z_{2k}(\vec{\xi})$. Though this could be done by the same method as \cite{Wa1}, the following definition by Shimizu (\cite{Sh1}) is nicer here. Take a compact oriented 4-manifold $W$ with $\partial W=M$ and with $\chi(W)=0$. By the condition $\chi(W)=0$, the outward normal vector field to $M$ in $TW|_{\partial W}$ can be extended to a nonsingular vector field $\nu_W$ on $W$. Let $T^vW$ be the orthogonal complement of the span of $\nu_W$. Then $T^vW$ is a rank 3 subbundle of $TW$ that extends $TM$. Take a sequence $\vec{\gamma}=(\gamma_1,\gamma_2,\ldots,\gamma_{3k})$ of generic sections of $T^vW$ so that $\gamma_i$ is an extension of $-\xi_i$. We define
\[ z_{2k}^\anomaly(\vec{\gamma})=\sum_\Gamma \#\calM_\Gamma^\loc(\vec{\gamma})\,[\Gamma(1,1,\ldots,1)] \in \calA_{2k}(\wLambda). \]
The sum is taken over all $\vec{C}$-graphs with $2k$ vertices and with only compact edges. Here, $\calM_\Gamma^\loc(\vec{\gamma})$ is the moduli space of affine graphs in the fibers of $T^vW$ whose $i$-th edge is a positive scalar multiple of $\gamma_i$. The number $\#\calM_\Gamma^\loc(\vec{\gamma})\in \Z$ is the count of the signs of the affine graphs that are determined by transversal intersections of some codimension 2 chains in a configuration space bundle over $W$. See \cite{Wa1} for detail. By the same argument as in \cite{Sh1}, it can be shown that $z_{2k}^\anomaly(\vec{\gamma})-\mu_k\,\mathrm{sign}\,W$, where $\mu_k\in\calA_{2k}(\wLambda)$ is the constant given in \cite{Wa1}, does not depend on the choices of $W$, $\nu_W$ and the extension $\vec{\gamma}$ of $\vec{\xi}$. We define $\widetilde{z}_{2k}(\vec{\xi})\in\calA_{2k}(\wLambda)$ by the following formula.
\[ \begin{split}
  \hat{z}_{2k}(\vec{\xi})&=z_{2k}(\vec{\xi})-z_{2k}^\anomaly(\vec{\gamma})+\mu_k\,\mathrm{sign}\,W,\\
  \widetilde{z}_{2k}(\vec{\xi})&=\sum_{\ve_i=\pm 1} \hat{z}_{2k}(\ve_1\xi_1,\ldots,\ve_{3k}\xi_{3k}).
\end{split} \]

\begin{Thm}\label{thm:1}
$\widetilde{z}_{2k}(\vec{\xi})$ is an invariant of the diffeomorphism type of $M$ and of the oriented bordism class of $\Sigma$.\footnote{There are only two possibilities for the class of $\Sigma$ that generates $\Omega_2(M)\cong\Z$, and the values of $\widetilde{z}_{2k}$ for the two differ by turning every $\wLambda$-coloring $\phi(t)$ on edge into $\phi(t^{-1})$. Thus, they can be considered essentially the same.}
\end{Thm}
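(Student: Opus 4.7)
The heart of what remains to be proved is independence of $\widetilde{z}_{2k}$ from the choice of $\vec{\xi}$, since independence from $\vec{g}$ and from $\Sigma$ within its oriented bordism class was already obtained in Lemma~\ref{lem:L}, independence of the correction $z_{2k}^\anomaly(\vec{\gamma})-\mu_k\,\mathrm{sign}\,W$ from $W$, $\nu_W$, $\vec{\gamma}$ is the result of \cite{Sh1} cited above, and invariance under a diffeomorphism of $M$ is automatic from the naturality of every piece of the construction. The plan is to run the standard parametric cobordism argument for configuration-space/Morse-theoretic perturbative invariants, in the form used in \cite{Wa1}, and verify that every codimension-1 degeneration contributes to one of the defining relations of $\calA_{2k}(\wLambda)$.

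Concretely, connect two generic tuples $\vec{\xi}^{(0)}$ and $\vec{\xi}^{(1)}$ by a generic one-parameter family $\vec{\xi}^{(s)}$, $s\in[0,1]$, and form, for each $\vec{C}$-graph $\Gamma$ with $2k$ black vertices and $\deg(\Gamma)=(1,\ldots,1)$, the parametrized moduli space
\[
 \bcalM_\Gamma=\{(s,I) : s\in[0,1],\ I\in\calM_\Gamma(\vec{\xi}^{(s)})\},
\]
which is generically a compact oriented $1$-manifold with boundary. The $s=0,1$ boundary contributes $z_{2k}(\vec{\xi}^{(1)})-z_{2k}(\vec{\xi}^{(0)})$ after application of $\Tr_{\vec{g}}$. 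The internal boundary decomposes, following \cite{Wa1}, into three codimension-one strata: (i) two black vertices collide (an edge of $\Gamma$ shrinks); (ii) the Morse data $f_i$ crosses a handle slide or a birth-death of critical points; (iii) a subgraph ``escapes'' into a configuration-space compactification face over a single point of $M$.

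The plan is then to read off the contribution of each stratum. Stratum (i), summed over $\Gamma$ with fixed underlying combinatorial contraction, cancels by the AS and IHX relations exactly as in \cite{Wa1}, with the $\wLambda$-weights on the surviving edges matching automatically because the two colliding vertices force the intersection numbers with $\Sigma$ to add across the collapsing edge. Stratum (ii) produces precisely the conjugation $S_p^{-1}\circ(\cdot)\circ S_p$ from the proof of Lemma~\ref{lem:L}, whose effect on $z_{2k}$ is killed by the Holonomy relation on $\calA_{2k}(\wLambda)$; the accompanying change of $g^{(i)}$ to $\bar g^{(i)}=S_p^{-1}\circ g^{(i)}\circ S_p$ is again a combinatorial propagator, and $\Tr_{\vec{g}}$ is invariant as already computed. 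Stratum (iii) is the ``anomaly'' face: here the tangential behavior of the gradients $\xi_i$ at the collision point determines the contribution, which, once $-\vec{\xi}$ is extended to the sections $\vec{\gamma}$ of $T^vW$, recombines into the variation of $z_{2k}^\anomaly(\vec{\gamma})-\mu_k\,\mathrm{sign}\,W$ produced by the deformation — by the argument of \cite{Sh1}. Summing all three contributions gives $\hat z_{2k}(\vec{\xi}^{(1)})=\hat z_{2k}(\vec{\xi}^{(0)})$, and averaging over the $\ve_i=\pm 1$ in the definition of $\widetilde{z}_{2k}$ kills the residual ambiguity via the Orientation reversal relation.

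The main obstacle, and the only part that is genuinely new compared to \cite{Wa1}, is verifying that the $\wLambda$-valued weighting $\varepsilon(I)=\pm t_1^{n_1}\cdots t_{3k}^{n_{3k}}$ and the closure operation $\Tr_{\vec{g}}$ behave functorially under each codimension-one degeneration, so that the three cancellations above take place at the level of $\wLambda$-colored graphs rather than only $\Q$-linearly. For (i) this is a bookkeeping check that the intersection numbers with $\Sigma$ add correctly across a collapsing edge; for (ii) it is exactly the computation shown in Lemma~\ref{lem:L}; and for (iii) it amounts to the observation that, in the anomaly face, the collapsing subgraph maps to a single point of $M$, so every $t_i$-weight on a ``local'' edge is $1$ and the local contribution is $[\Gamma(1,1,\ldots,1)]$, matching the definition of $z_{2k}^\anomaly$. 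Once these three checks are in place, the remainder of the argument is a direct transcription of the homology-sphere case.
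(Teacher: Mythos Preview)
Your stratification of the codimension-one boundary of the parametrized moduli space is incomplete, and this is a genuine gap rather than a matter of emphasis. In addition to the collapse faces you list, there are boundary points at which an edge of a flow-graph (compact or separated) \emph{breaks in the middle} by passing through a critical point of its associated $f_i$; these occur at isolated generic values of $s$ even when every $f_i^{(s)}$ is Morse and no handle slide is taking place. This is the paper's degeneration of type~(2), and it is precisely where the defining identity $\partial^{(i)}g^{(i)}+g^{(i)}\partial^{(i)}=1$ of a combinatorial propagator enters: under $\Tr_{\vec g}$, the two ``broken'' separated-edge contributions combine with the compact-edge term $\delta_{\tilde p_i\tilde q_i}\Gamma'(\emptyset,\emptyset)_i$ to give $\Tr_{\ldots,\partial^{(i)}g^{(i)}+g^{(i)}\partial^{(i)},\ldots}(\cdots)+\Tr_{\vec g}(\cdots)=0$. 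Your argument never invokes this identity, so the cancellation you need is unsupported.

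You also miss the type-(3) boundary, in which the lone black vertex on a \emph{closed} separated edge (a self-loop) reaches an endpoint of the underlying flow-line $\overline\gamma_{p_iq_i}$. This contribution is expressed in the paper via the $1$-chain $O(\xi_i)$ of equation~(\ref{eq:O}), and it is \emph{not} killed by any relation in $\calA_{2k}(\wLambda)$. Instead one needs Lemma~\ref{lem:O-closed}, which says that $O(\xi_i)+O(-\xi_i)$ is a $1$-cycle; the symmetrization over $\ve_i=\pm1$ in the definition of $\widetilde z_{2k}$ is introduced for exactly this reason, not to ``kill the residual ambiguity via the Orientation reversal relation'' as you write. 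Without this lemma, $\hat z_{2k}$ by itself is not invariant.

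Finally, your description of stratum~(ii) conflates two unrelated mechanisms. The conjugation $S_p^{-1}\circ(\cdot)\circ S_p$ in Lemma~\ref{lem:L} concerns an isotopy of the surface $\Sigma$ across a critical point and has nothing to do with handle slides. A genuine $j/j$-intersection changes $\partial^{(1)}$ and forces $g'=(1+h)\circ g\circ(1-h)$ for an elementary endomorphism $h$ counting the slide with holonomy; one must then check that $\Tr_{hg'-g'h,\ldots}(\cdots)$ is cancelled by the degenerate flow-graphs that actually contain the $j/j$ flow-line. That is a different computation from the one in Lemma~\ref{lem:L}, and the Holonomy relation is not what makes it work.
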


%%%%%%%%%%%%%%%%%%%%%%%%%%%%%%
%%%%%%%%%%%%%%%%%%%%%%%%%%%%%%
\mysection{Proof of Theorem~\ref{thm:1}}{s:proof}

\subsection{Closedness of the 1-chain for a self-loop}

We will use the following lemma in the proof of Theorem~\ref{thm:1}. Let $\Gamma$ be a $\vec{C}$-graph whose $i$-th edge is a closed separated edge. Let $\Gamma'$ be the graph obtained from $\Gamma$ by removing the $i$-th edge. Then there is a 1-chain $O(\xi_i)$ of $M$ with untwisted $\wLambda$-coefficients\footnote{Notice that the holonomy along the closed separated edge does not depend on the position of the black vertex on it.} such that 
\begin{equation}\label{eq:M-O}
 \#\calM_\Gamma(\vec{\xi})=\calM_{\Gamma'}(\xi_1,\ldots,\widehat{\xi_i},\ldots,\xi_{3k})\cdot \mathrm{pr}_i^{-1}O(\xi_i) .
\end{equation}
Here, $\mathrm{pr}_i:M^{2k}\to M$ is the projection to the $i$-th factor, and the symbol $\cdot$ is the intersection between smooth manifold strata in $M^{2k}$, extended linearly to chains. Let $\calM_{p_iq_i}(\xi_i)$ denote the set of all flow-lines of $-\xi_i$ that flow from $p_i$ to $q_i$. Then $O(\xi_i)$ can be written as follows.
\begin{equation}\label{eq:O}
 O(\xi_i)=\sum_{{p_i,q_i}\atop{\mathrm{ind}(p_i)=\mathrm{ind}(q_i)+1}}\sum_{\gamma_{p_iq_i}\in\calM_{p_iq_i}(\xi_i)}(-1)^{\mathrm{ind}(q_i)}g_{q_ip_i}^{(i)}\ve(\gamma_{p_iq_i})\mathrm{Hol}(\gamma_{p_iq_i})\overline\gamma_{p_iq_i},
\end{equation}
where $\overline\gamma_{p_iq_i}$ is the 1-chain obtained from the flow-line by compactification, and $\mathrm{Hol}(\gamma_{p_iq_i})=t^{n_i}$, where $n_i$  is the intersection number of $\gamma_{p_iq_i}$ with $\Sigma$.

\begin{Lem}\label{lem:O-closed}
$O(\xi_i)+O(-\xi_i)$ is a 1-cycle\footnote{This lemma is implicit in \cite{Sh2}.}.  
\end{Lem}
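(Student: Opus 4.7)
My plan is to compute $\partial O(\xi_i)$ directly from the definition~(\ref{eq:O}), show it equals $\sum_{x}(-1)^{\mathrm{ind}(x)}x$, and then use that the Morse indices with respect to $f_i$ and $-f_i$ sum to $\dim M = 3$ to conclude. Two facts are needed at the outset. First, for $\mathrm{ind}(p_i)=\mathrm{ind}(q_i)+1$ the moduli space $\calM_{p_iq_i}(\xi_i)$ is $0$-dimensional and compact, so each element is an isolated unbroken flow-line and $\overline\gamma_{p_iq_i}$ is an embedded closed interval in $M$ oriented by $-\xi_i$. Therefore, as a singular $1$-chain with $\wLambda$-coefficients, $\partial\overline\gamma_{p_iq_i}=q_i-p_i$. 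Second, by the identification of flow-lines in $M$ from $p_i$ to $q_i$ with intersection number $k$ with flow-lines in $\wM$ from $p_i$ to $t^{k}q_i$, we have $\sum_{\gamma\in\calM_{p_iq_i}(\xi_i)}\ve(\gamma)\mathrm{Hol}(\gamma)=\partial^{(i)}_{p_iq_i}$.

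Substituting these into (\ref{eq:O}), splitting the contribution of $q_i-p_i$ into two sums, and relabeling the outer summation index so that the critical point $x$ is a common variable, I obtain
\[
\partial O(\xi_i)=\sum_{x}(-1)^{\mathrm{ind}(x)}\Bigl(\sum_{p:\mathrm{ind}(p)=\mathrm{ind}(x)+1}g^{(i)}_{xp}\,\partial^{(i)}_{px}\;+\sum_{s:\mathrm{ind}(s)=\mathrm{ind}(x)-1}\partial^{(i)}_{xs}\,g^{(i)}_{sx}\Bigr)x.
\]
The inner parenthesis is exactly the diagonal coefficient of $x$ in $(\partial^{(i)}g^{(i)}+g^{(i)}\partial^{(i)})(x)$, which equals $1\in\wLambda$ by the propagator identity $\delta g^{(i)}=1$. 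Hence $\partial O(\xi_i)=\sum_{x}(-1)^{\mathrm{ind}(x)}\,x$ as a $0$-chain supported on the critical points of $f_i$.

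The very same computation applied to the Morse function $-f_i$, with gradient $-\xi_i$ and its own (independently chosen) propagator $g^{(i),-}$, yields $\partial O(-\xi_i)=\sum_{x}(-1)^{\mathrm{ind}^-(x)}\,x$. The critical points are the same underlying points, but $\mathrm{ind}^-(x)=3-\mathrm{ind}(x)$ since $\dim M=3$, so $(-1)^{\mathrm{ind}^-(x)}=-(-1)^{\mathrm{ind}(x)}$. Adding, $\partial\bigl(O(\xi_i)+O(-\xi_i)\bigr)=0$, which is the claim.

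The main obstacle I expect is sign bookkeeping: one must check that the convention for $\ve(\gamma_{p_iq_i})$ appearing in (\ref{eq:O}) is precisely the one entering $\partial^{(i)}_{p_iq_i}$, and that $\overline\gamma_{p_iq_i}$ is oriented by the flow (consistent with the orientation $o$ in the footnote). The pleasant feature of the argument is that the propagator identity isolates the diagonal entry of $\partial g+g\partial$ as the constant $1\in\wLambda$, so the two independently chosen propagators $g^{(i)}$ and $g^{(i),-}$ enter the answer only through this constant; their detailed values drop out, and only the $\mathrm{ind}/\mathrm{ind}^-$ signs matter, cancelling thanks to $\dim M=3$.
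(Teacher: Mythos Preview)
Your proof is correct and follows essentially the same route as the paper: compute $\partial O(\xi_i)$ using $\partial\overline\gamma_{p_iq_i}=q_i-p_i$ and $\sum_\gamma\ve(\gamma)\mathrm{Hol}(\gamma)=\partial^{(i)}_{p_iq_i}$, recognize the coefficient of each critical point as the diagonal entry of $\partial^{(i)}g^{(i)}+g^{(i)}\partial^{(i)}=1$, and then use the index complementarity $\mathrm{ind}^{-}(x)=3-\mathrm{ind}(x)$ to cancel. The only cosmetic difference is that the paper takes the propagator for $-\xi_i$ to be the one induced from $g^{(i)}$ by duality (writing $\overline{g}^{(i)}(t)=g^{(i)}(t^{-1})$ and passing to the adjoint), whereas you observe---correctly---that since only the diagonal of the propagator identity enters, any combinatorial propagator $g^{(i),-}$ for $C_*(-f_i;\wLambda)$ will do.
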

\begin{proof} By (\ref{eq:O}), we have
\[ \partial O(\xi_i)=\sum_{{p_i,q_i}\atop{\mathrm{ind}(p_i)=\mathrm{ind}(q_i)+1}}\sum_{\gamma_{p_iq_i}\in\calM_{p_iq_i}(\xi_i)}(-1)^{\mathrm{ind}(q_i)}g_{q_ip_i}^{(i)}\ve(\gamma_{p_iq_i})\mathrm{Hol}(\gamma_{p_iq_i})(q_i-p_i). \]
The coefficient of $p_i$ of index $k$ in this formula is given by
\[ \begin{split}
  &-\sum_{{q_i}\atop{\mathrm{ind}(q_i)=k-1}}(-1)^{k-1}g_{q_ip_i}^{(i)}\partial_{p_iq_i}^{(i)}+\sum_{{r_i}\atop{\mathrm{ind}(r_i)=k+1}}(-1)^k g_{p_ir_i}^{(i)}\partial_{r_ip_i}^{(i)}\\
  &= (-1)^k (g^{(i)}\circ\partial^{(i)}+\partial^{(i)}\circ g^{(i)})_{p_ip_i}=(-1)^k.
\end{split} \]
Similarly, the coefficient of $p_i$ of index $3-k$ in $\partial O(-\xi_i)$ is given by
\[ \begin{split}
  &\sum_{{q_i}\atop{\mathrm{ind}(q_i)=3-(k-1)}}(-1)^{3-k}\overline{g}_{p_iq_i}^{(i)}\,\overline{\partial}_{q_ip_i}^{(i)}-\sum_{{r_i}\atop{\mathrm{ind}(r_i)=3-(k+1)}}(-1)^{3-(k+1)}\overline{g}_{r_ip_i}^{(i)}\overline{\partial}_{p_ir_i}^{(i)}\\
  &= -(-1)^k (g^{(i)*}\circ\partial^{(i)*}+\partial^{(i)*}\circ g^{(i)*})_{p_ip_i}=-(-1)^k,
\end{split} \]
where $\overline{g}^{(i)}(t)=g^{(i)}(t^{-1})$ etc., and $g^{(i)*}$ etc. is given by the adjoint matrix. This proves $\partial O(\xi_i)+\partial O(-\xi_i)=0$.
\end{proof}

\subsection{Completing the proof of Theorem~\ref{thm:1}}

When the $i$-th edge of a $\vec{C}$-graph $\Gamma$ is a separated edge on which $x,y\in P^{(i)}$ are attached on the input/output respectively, we will write $\Gamma=\Gamma(x,y)_i$. This notation enables us to express the graph $\Gamma(x,y)_i$ with $x,y$ replaced with $x',y'$ respectively, as $\Gamma(x',y')_i$. The notation $\Gamma(\emptyset,\emptyset)_i$ will denote the graph obtained from $\Gamma(x,y)_i$ by replacing the $i$-th edge with a compact edge.

Since the proof is parallel to that of the main theorem of \cite{Wa1}, we only give an outline. We show that the value of $\widetilde{z}_{2k}$ does not change if one Morse function in the sequence $\vec{f}$, say $f_1$, is replaced with another Morse function $f_1'$. As usual in Cerf theory (\cite{Ce}), we use the fact that there is a smooth 1-parameter family $\{h_s:M\to \R\}_{s\in [0,1]}$ that restricts to $f_1$ and $f_1'$ on $s=0,1$ respectively, such that $h_s$ is Morse except for finitely many values of $s$ and at the excluded values the singularities of $h_s$ consist of birth-death singularities and Morse singularities. Moreover, there may be finitely many values of $s$ at which the Morse complex for $\wxi_s=\mathrm{grad}\,\widetilde{h}_s$ changes, namely, at which there is a flow-line of $\xi_s=\mathrm{grad}\,h_s$ between two Morse critical points of the same index $j$. Such a flow-line is called a {\it $j/j$-intersection} and corresponds to a handle-slide.

Let $J=[s_0,s_1]\subset [0,1]$ be an interval that does not have birth-death parameter. By replacing $f_1$ with the family $\{h_s\}$, the moduli spaces $\calM_\Gamma(\vec{\xi}_J)$ and its natural compactifications $\bcalM_\Gamma(\vec{\xi}_J)$ for flow-graphs mapped along the fiber $M$ in $J\times M$ are defined. The moduli space $\bcalM_\Gamma(\vec{\xi}_J)$ is an oriented compact 1-dimensional manifold immersed in $J\times \bConf_{2k}(M)$, where $\bConf_{2k}(M)$ is the differential geometric analogue of the Fulton--MacPherson compactification of the configuration space $\Conf_{2k}(M)$ (\cite{AS, Ko1}). If $\bcalM_\Gamma(\vec{\xi}_J)$ does not have boundaries except the endpoints of $J$ for every $\Gamma$, then it gives a cobordism between the moduli spaces on the endpoints of $J$ and it follows that the value of $z_{2k}$ does not change between $s_0$ and $s_1$. Here, the value of $\#\calM_\Gamma(\vec{\xi}_s)$, $\vec{\xi}_s=(\xi_s,\xi_2,\ldots,\xi_{3k})$, may change when a vertex of $\Gamma$ intersects $\Sigma$, but the difference is killed by the $\Tr$ as in Lemma~\ref{lem:L} or by the Holonomy relation and the trace is invariant. 

In general, $\bcalM_\Gamma(\vec{\xi}_J)$ may have boundaries on the interior of $J$. It follows from results in \cite[\S{8}]{Wa1} that the boundary of $\bcalM_\Gamma(\vec{\xi}_J)$ consists of degenerate flow-graphs as follows (see Figure~\ref{fig:degenerate-graphs}). 

\begin{enumerate}
\item A subgraph (or the whole) of $\Gamma$ collapses into a point of $M$.
\item An edge of $\Gamma$, either compact or separated, splits in the middle by a critical point.
\item The black vertex on a closed separated edge coincides with a critical point.
\end{enumerate}
\begin{figure}
\fig{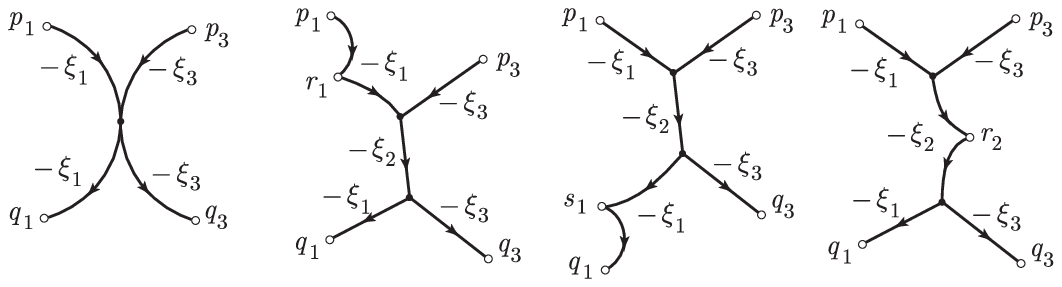}\\
\fig{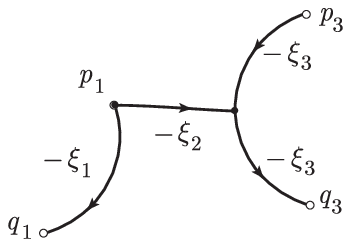}
\caption{Degenerate flow-graphs}\label{fig:degenerate-graphs}
\end{figure}

Among the degenerations of type (1), if the whole of a graph collapses, then $z_{2k}$ may change. However, $\hat{z}_{2k}$ does not change because the change is cancelled by the change of the correction term, as shown in \cite[\S{10}]{Wa1}. When a proper subgraph with at least 3 vertices collapses, then the sum of the changes is shown to vanish by the same arguments of symmetries as given in \cite{Ko1}. When a subgraph with exactly 2 vertices collapses, the sum of the changes vanishes by the IHX relation. See \cite[\S{5, 6}]{Wa1} for detail about this paragraph. 

The degenerations of type (3) do not change $\widetilde{z}_{2k}$ by (\ref{eq:M-O}) and Lemma~\ref{lem:O-closed}.

The degenerations of type (2) can be treated by the same argument as in \cite[\S{10}]{Wa1}. There are no changes in the proof except that $\Z$ is replaced with $\Lambda$ and that the coefficients of graphs belong to $\Lambda^{\otimes 3k}$. Namely, around a parameter $s_0\in J$ of the inner boundary of $\bcalM_\Gamma(\vec{\xi}_J)$ where there are no $j/j$-intersections, the difference $\hat{z}_{2k}(\vec{\xi}_{s_0+\ve})-\hat{z}_{2k}(\vec{\xi}_{s_0-\ve})$ is given by $\Tr_{\vec{g}}$ of the terms of degenerate flow-graphs of type (2) and it is equal to $\Tr_{\vec{g}}$ of 
\[ \begin{split}
  -\sum_{i=1}^{3k}\asum{\Gamma'(\tilde{p}_i,\tilde{q}_i)_i}{\mathrm{ind}(\tilde{p}_i)=\mathrm{ind}(\tilde{q}_i)}W_{\Gamma'(\tilde{p}_i\tilde{q}_i)_i}\cdot
\Bigl(
&\asum{x_i\in P_*^{(i)}}{\mathrm{ind}(x_i)=\mathrm{ind}(\tilde{p}_i)+1}\partial^{(i)}_{x_i\tilde{p}_i}\cdot\Gamma'(x_i,\tilde{q}_i)_i\\
+&\asum{y_i\in P_*^{(i)}}{\mathrm{ind}(y_i)=\mathrm{ind}(\tilde{q}_i)-1}\partial^{(i)}_{\tilde{q}_iy_i}\cdot\Gamma'(\tilde{p}_i,y_i)_i+\delta_{\tilde{p}_i\tilde{q}_i}\Gamma'(\emptyset,\emptyset)_i
\Bigr),
\end{split} \]
where the second sum is taken over uncolored graphs of the form $\Gamma'(\tilde{p}_i,\tilde{q}_i)_i$ of degree $(\eta_1,\eta_2,\ldots,\eta_{3k})$ with $\eta_\ell=1$ for $\ell\neq i$ and $\eta_i=0$, and 
\[ W_{\Gamma'(\tilde{p}_i\tilde{q}_i)_i}=(-1)^{\mathrm{ind}(\tilde{p}_i)}\#\calM_{\Gamma'(\tilde{p}_i,\tilde{q}_i)_i}(\vec{\xi}_J)\in\Lambda^{\otimes 3k}.\]
We consider $\partial^{(i)}_{x_i\tilde{p}_i}$ etc. as an element of $\Lambda^{\otimes 3k}$ by identifying $\Lambda$ with $1^{\otimes (i-1)}\otimes \Lambda\otimes 1^{\otimes (3k-i)}$. For each fixed pair $\tilde{p}_i,\tilde{q}_i\in P^{(i)}_*$ with $\mathrm{ind}(\tilde{p}_i)=\mathrm{ind}(\tilde{q}_i)$, we have
\[ \begin{split}
&\Tr_{\vec{g}}\Bigl(W_{\Gamma'(\tilde{p}_i\tilde{q}_i)_i}\cdot\Bigl(\asum{x_i\in P_*^{(i)}}{\mathrm{ind}(x_i)=\mathrm{ind}(\tilde{p}_i)+1}\partial^{(i)}_{x_i\tilde{p}_i}\cdot\Gamma'(x_i,\tilde{q}_i)_i\\
&\hspace{22mm}+\asum{y_i\in P_*^{(i)}}{\mathrm{ind}(y_i)=\mathrm{ind}(\tilde{q}_i)-1}\partial^{(i)}_{\tilde{q}_iy_i}\cdot\Gamma'(\tilde{p}_i,y_i)_i+\delta_{\tilde{p}_i\tilde{q}_i}\Gamma'(\emptyset,\emptyset)_i
\Bigr)\Bigr) \\
&=\Tr_{\ldots,\partial^{(i)}g^{(i)}+g^{(i)}\partial^{(i)},\ldots}\Bigl(W_{\Gamma'(\tilde{p}_i\tilde{q}_i)_i}\cdot\Gamma'(\tilde{p}_i,\tilde{q}_i)_i\Bigr)+\Tr_{\vec{g}}\Bigl(W_{\Gamma'(\tilde{p}_i\tilde{q}_i)_i}\cdot\delta_{\tilde{p}_i\tilde{q}_i}\Gamma'(\emptyset,\emptyset)_i\Bigr)\\
&=\Tr_{\ldots,\mathrm{id},\ldots}\Bigl(W_{\Gamma'(\tilde{p}_i\tilde{q}_i)_i}\cdot\Gamma'(\tilde{p}_i,\tilde{q}_i)_i\Bigr)+\Tr_{\vec{g}}\Bigl(W_{\Gamma'(\tilde{p}_i\tilde{q}_i)_i}\cdot\delta_{\tilde{p}_i\tilde{q}_i}\Gamma'(\emptyset,\emptyset)_i\Bigr)=0.
\end{split}\]
Around a parameter $s_0\in J$ of the inner boundary of $\bcalM_\Gamma(\vec{\xi}_J)$ at a $j/j$-intersection, the difference $\hat{z}_{2k}(\vec{\xi}_{s_0+\ve})-\hat{z}_{2k}(\vec{\xi}_{s_0-\ve})$ is decomposed into two parts, as follows. Let $g,g'$ be the combinatorial propagators for $C^{(1)}$ at $s_0-\ve$ and $s_0+\ve$ respectively and let $\vec{g}=(g,g^{(2)},\ldots,g^{(3k)}),\vec{g}'=(g',g^{(2)},\ldots,g^{(3k)})$. Then we have
\[ \begin{split}
	&\sum_{\Gamma}\Tr_{\vec{g}'}\bigl(\#\bcalM_\Gamma(\vec{\xi}_{s_0+\ve})\cdot \Gamma\bigr)
	-\sum_{\Gamma}\Tr_{\vec{g}}\bigl(\#\bcalM_\Gamma(\vec{\xi}_{s_0-\ve})\cdot \Gamma\bigr)\\
	&=\sum_\Gamma\Tr_{\vec{g}'}\Bigl(\bigl(\#\bcalM_\Gamma(\vec{\xi}_{s_0-\ve})-\#\bcalM_{d''\Gamma}(\vec{\xi}_J)\bigr)\cdot\Gamma\Bigr)-\sum_\Gamma\Tr_{\vec{g}}\bigl(\#\bcalM_\Gamma(\vec{\xi}_{s_0-\ve})\cdot\Gamma\bigr)\\
	&=\sum_\Gamma\Tr_{g'-g,\ldots}\bigl(\#\bcalM_\Gamma(\vec{\xi}_{s_0-\ve})\cdot\Gamma\bigr)-\sum_\Gamma\Tr_{\vec{g}'}\bigl(\#\bcalM_{d''\Gamma}(\vec{\xi}_J)\cdot\Gamma\bigr).
\end{split}\]
Here, $\#\bcalM_{d''\Gamma}(\vec{\xi}_J)$ is the sum of the counts of the degenerate flow-graphs including the $j/j$-intersection with appropriate signs.  The first term in the last line corresponds to the change of the combinatorial propagator and the other one corresponds to the count of the degenerate flow-graphs including the $j/j$-intersection. The change of the combinatorial propagator can be described explicitly as follows. The underlying $\wLambda$-modules $\vec{C}$ do not change between $s_0-\ve$ and $s_0+\ve$ while the boundary operator $\partial^{(1)}$ may change at $s_0$ and the combinatorial propagator $g$ changes accordingly. For an endomorphism $1+h\in \End_{\wLambda}(C^{(1)})_0$ corresponding to an elementary matrix, where $h$ counts the $j/j$-intersection with holonomy, $g'$ can be given as $g'=(1+h)\circ g\circ (1-h)$, which gives $g'-g=hg'-g'h$. The trace of a graph by $hg'-g'h$ is cancelled by the part of the counts of the degenerate flow-graphs including the $j/j$-intersection. See \cite[\S{10}]{Wa1} for detail.

When $s$ crosses a parameter of a birth-death singularity, a separated edge will be glued together into a compact edge, or its reverse. The explicit form of the gluing is exactly the same as \cite[\S{8}]{Wa1}, because the gluing is local. 
\qed

\section*{\bf Acknowledgments.}
I would like to thank Tatsuro Shimizu for explaining to me his works. 
This work is supported by JSPS Grant-in-Aid for Scientific Research 26800041 and 26400089.
\par\bigskip

\end{document}